\newtheorem{define}{Definition}
\newtheorem{assump}{Assumption}
\begin{document}
\markboth{International Journal of Control}{International Journal of Control}
\articletype{RESEARCH ARTICLE}
\title{Observer-Based Higher Order Sliding Mode Control of
Unity Power Factor in Three-Phase AC/DC Converter
for Hybrid Electric Vehicle Applications}
\author{Jianxing Liu, Salah Laghrouche$^*$, and Maxime Wack \\
\thanks{$^\ast$Corresponding author. Email: salah.laghrouche@utbm.fr
\vspace{6pt}} {\em{Laboratoire IRTES, Universit\'e de Technologie de Belfort-Montb\'eliard (UTBM), Belfort, France}}}
\maketitle
\begin{abstract}
In this paper, a full-bridge boost power converter topology is studied
for power factor control, using output higher order sliding mode control.
The AC/DC converters are used for charging the battery and super-capacitor
in hybrid electric vehicles from the utility.
The proposed control forces the input currents to track the desired values, which can
controls the output voltage while keeping the power factor close to one.
Super-twisting sliding mode observer is employed to estimate the input currents
and load resistance only from the measurement of output voltage.
Lyapunov analysis shows the asymptotic convergence of
the closed loop system to zero.
Simulation results show the effectiveness and robustness of the proposed controller.
\end{abstract}\bigskip
\begin{keywords}
AC/DC Converter; Sliding Mode Control (SMC); Observer-Based Control;
Super-Twisting Observer; Unity-Power-Factor; Hybrid Electric Vehicle
\end{keywords}\bigskip
\section{INTRODUCTION}
With the advent of distributed DC power sources in the energy sector,
the use of boost type three phase rectifiers has increased in industrial applications,
especially, battery charger in hybrid electric vehicles (HEV) \citep{4140625,6179537,6031928,6380611}.
Power-factor-corrected utility interfaces are of great importance in the HEV industry.
The complete energy conversion cycle of the HEV must convert electrical power from the utility
to mechanical power at the drive axle as efficiently and as economically as possible \citep{4140625,6191321,6258881}.
Different power conversion systems of plug-in HEV power conditioning
systems are presented in \cite{5289744,5170004,5613206,5075550,5256209}.

Fig. \ref{fig:systemModel} shows the structure of the hybrid electric vehicle
power conversion system, which consists of of an AC/DC converter, a three phase
DC/AC inverter \citep{6151132,6248697}, three  DC/DC converters
and various power storages i.e. source grid, battery, super-capacitor and fuel cell.
The AC/DC converter is used to charge the battery and super-capacitor through
its bidirectional DC/DC converters, while ensuring
that the utility current is drawn at unity power factor in order to
minimize line distortion and maximize the real power available from the utility outlet.
The battery and super-capacitor supply power to the three-phase inverter which
feeds the three-phase motor.

The AC/DC converter consists of two stages \citep{4493430,777198}.
The first stage is Power Factor Correction (PFC), which simultaneously
regulates the DC-link voltage level and the line current waveform.
The second stage is a charger
with different types of resonant or pulse-width-modulation (PWM)
DC/DC converters \citep{4140625}.
PFC is used to improve the quality
of the input phase current that is sourced from the utility
by generating and tracking the desired current profile while the charger
is used to charge the battery and super-capacitor in a HEV.

Many schemes and solutions are proposed in the field of PFC. Linear control methods using
linear regulators for the output voltage control have been proposed in \cite{212395,9172},
which change the modulation index slowly, thus resulting in a slow dynamical response.
Consequently, the linear feedback control of the rectifier output voltage becomes slow and difficult.
Moreover, due to coupling between the duty-cycle and the state variables in the AC/DC boost converter,
linear controllers are not able to perform optimally for the whole range of operating conditions.
In contrast with linear control, nonlinear approaches can optimize the performance
of the AC/DC converter over a wide range of operating conditions.
Many nonlinear techniques have been proposed,
such as input-output linearization \citep{1187319},
feedback linearization \citep{845058}, fuzzy logic control \citep{1413544},
passivity-based control \citep{930975},
back-stepping technique control \citep{allag2007tracking},
Lyapunov-based control \citep{6179537,komurcugil1998lyapunov},
differential flatness based control \citep{6031928,6032095,6127921},
and sliding mode control \citep{unitypowerfactor,767067,4384358}.
However, most of the above works need continuous measurements of AC voltages,
AC currents and DC voltage.
This requires a large number of both voltage and current sensors,
which increases system complexity, cost, space, and reduces system reliability.
Moreover, the sensors are susceptible to electrical noise,
which cannot be avoided during high-power switching.
Reducing the number of sensors has a significant affect upon
the control system's performance.
A few results have been proposed to reduce the current sensors
\citep{andersen1999active,212395,1158977,925576} where the input phase currents are reconstructed
from the switching states of the AC/DC rectifier and the measured DC-link currents,
and then used in feedback control.
However, they require digital sampling of the DC-link current in every switching
cycle and numerical computations.
The accuracy of measurement is inherently controlled by the sampling rate.

The objective of this paper is to design an efficient AC/DC power converter
that charges the battery and super-capacitor in a HEV
with unity power factor, by eliminating the using of current sensors.
Only voltage sensors are required for measuring the output voltage and source voltage.
A Super-Twisting (ST) Sliding Mode Observer (SMO) is designed to observe the phase currents
and load resistance \citep{unitypowerfactor,6313905} from the measured output voltage.
The proposed ST SMO guarantees fast convergence rate
of the observation error dynamics, facilitating the design of controllers.
The controller and observer design
is based on the two goals mentioned in \cite{acdc_converter,edwards1998sliding},
for the design of an efficient AC/DC power converter:
1) Unity power factor to maximize the performance of the power conversion.
2) Ripple free output voltage.

Sliding mode algorithm is known for the characteristics of
robustness and effectiveness \citep{edwards1998sliding},
making it an effective method to deal with the nonlinear behavior of the boost rectifiers.
The ST Sliding Mode Control (SMC) allows not only the achievement of the high performance of the system but also the maintenance of
the functionality under parametric uncertainty and external disturbance.
A strong Lyapunov function is introduced to prove the stability of both the
observer and controller system.

The paper is organized as follows.
In Section \textrm{II}, the mathematical model and control objectives are presented.
In Section \textrm{III}, the design of observer-based current controller based on
ST Algorithm (STA) is presented.
In Section \textrm{IV}, we show the design of the parameter observer for the system,
and power factor is also estimated.
In Section \textrm{V}, simulations results of the performance of the obtained
ST SMC compared with the conventional PI controller are presented.
Finally, some conclusions are drawn in Section \textrm{VI}.
\begin{figure}[htbp]
  \centering
  \includegraphics[width=4.8in]{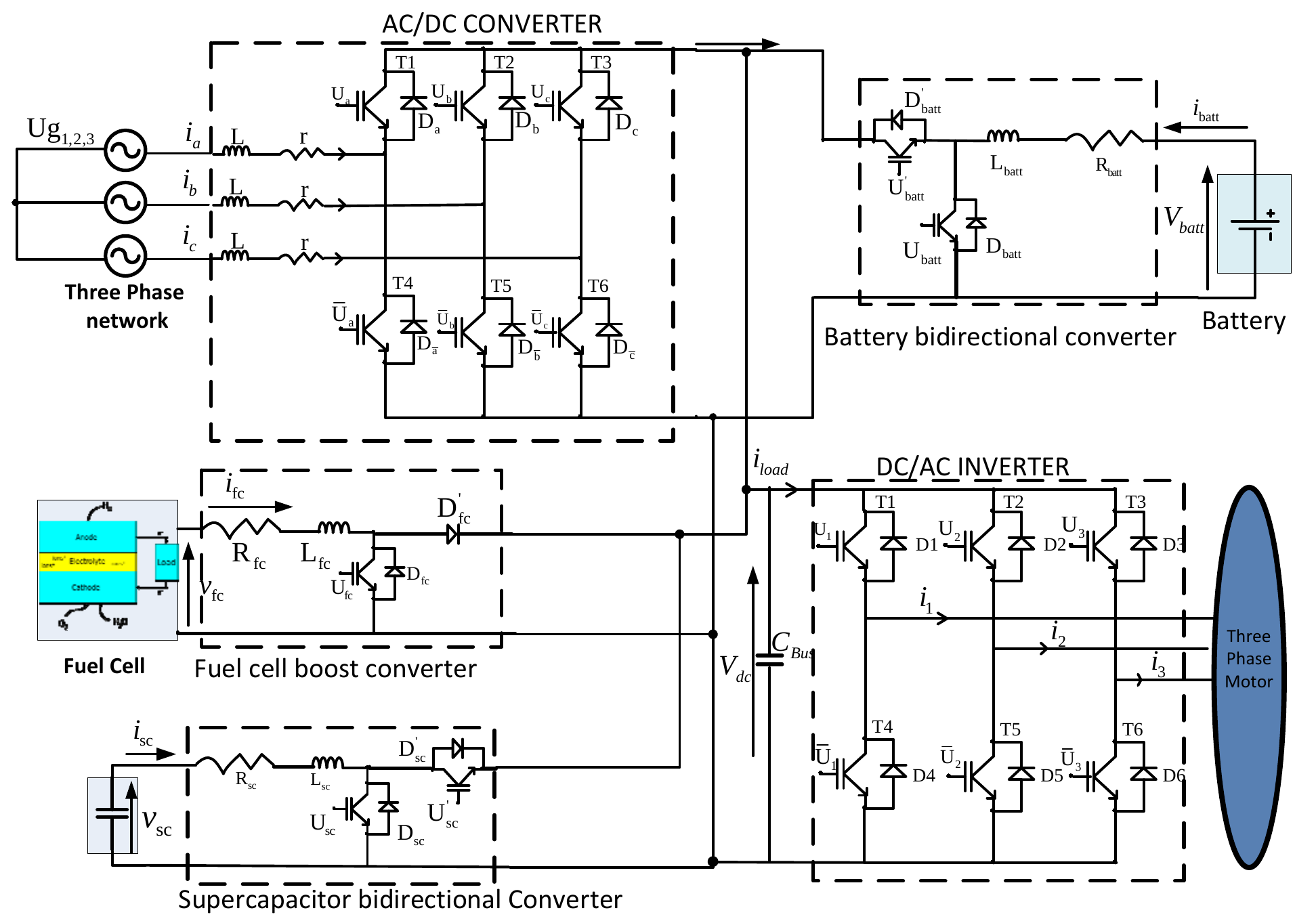}
  \caption{PEMFC powered hybrid system consists of Source Grid, Fuel Cell,
  Super-capacitor and battery}
  \label{fig:systemModel}
\end{figure}
\section{PROBLEM FORMULATION}\label{sec:Problemformulation}
\subsection{System Modeling}\label{sec:Model}
The power circuit of the three phase voltage source AC/DC full-bridge boost converter
under consideration is shown in Fig. \ref{fig:acdc}.
It is assumed that a equivalent resistive load
$R_L$ is connected to the output of the AC/DC converter \citep{6179537,6031928}.
\begin{figure*}[htbp]
  \centering
  \includegraphics[width=4.8in]{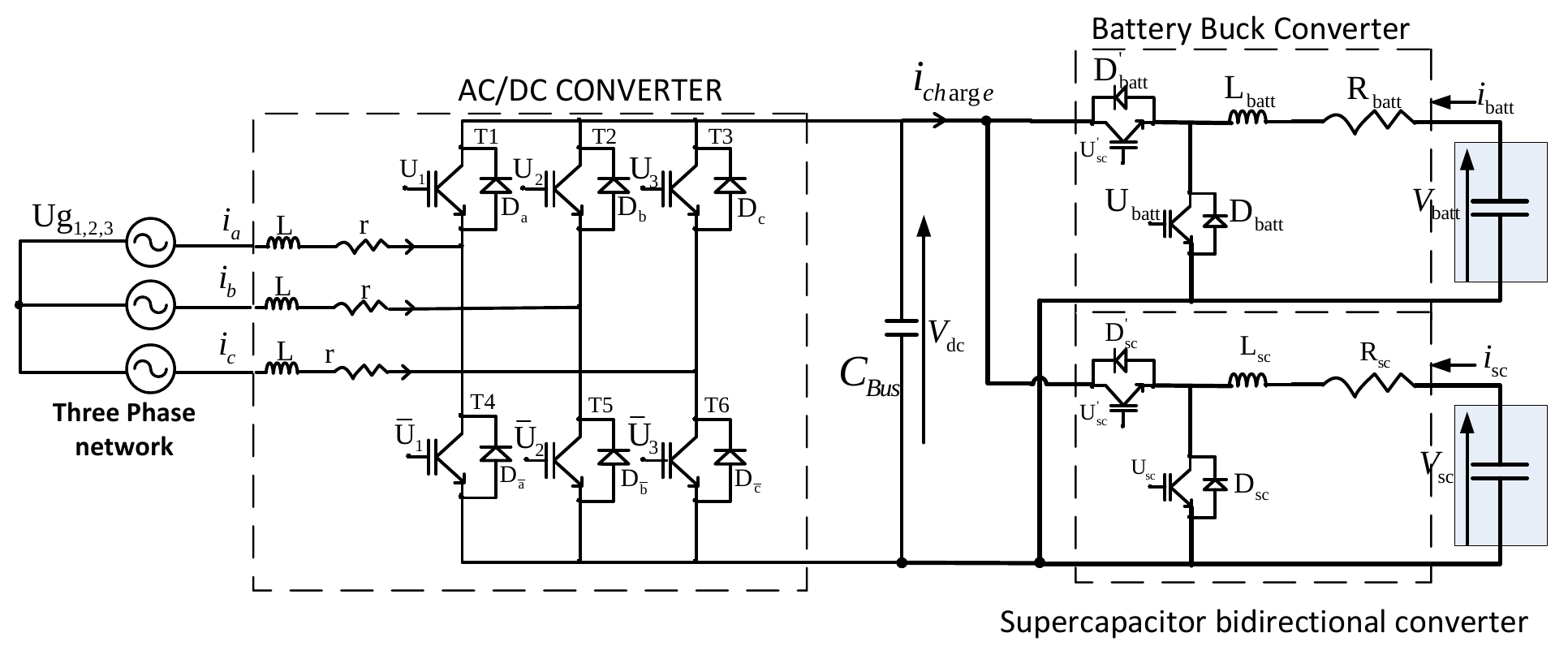}
  \caption{Electrical circuit of the three phase AC/DC boost converter}
  \label{fig:acdc}
\end{figure*}
The control inputs, as they appear in the system, are defined as
$\bm{u}=[u_1 \ \ u_2 \ \ u_3]^T$,
which take values from the discrete set $\{-1,+1\}$.
The corresponding inverse control $\bm{\bar{u}}=[\bar{u}_1 \ \ \bar{u}_2 \ \ \bar{u}_3]^T$
takes the opposite values at the same time,
i.e. $u_1=+1$ means $\bar{u}_1=-1$ which corresponds to the conducting state
for the upper switching element $T_1$ and nonconducting state
for the bottom switching element $T_4$ \citep{unitypowerfactor}.

The mathematical model of the boost AC/DC converter
in phase coordinate frame can be obtained
through analyzing the circuit \citep{212395},
\begin{equation}\label{eqn:dynamicmodel}
\left\{
\begin{split}
\frac{di_a}{dt}=&-\frac{r}{L}i_a-\frac{U_0}{6L}(2u_1-u_2-u_3)+\frac{1}{L}U_{g_1}, \\
\frac{di_b}{dt}=&-\frac{r}{L}i_b-\frac{U_0}{6L}(2u_2-u_1-u_3)+\frac{1}{L}U_{g_2}, \\
\frac{di_c}{dt}=&-\frac{r}{L}i_c-\frac{U_0}{6L}(2u_3-u_1-u_2)+\frac{1}{L}U_{g_3}, \\
\frac{dU_0}{dt}=&-\frac{U_0}{R_LC}+\frac{1}{2C}(i_au_1+i_bu_2+i_cu_3).
\end{split}
\right.
\end{equation}
It can also be written as,
\begin{equation}
\left\{
\begin{split}
\frac{\bm{di}}{\bm{dt}}=&-\frac{r}{L}\bm{i}-\frac{U_0}{6L}B\bm{u}+\frac{1}{L}\bm{U_g},\\
\frac{dU_0}{dt}=&-\frac{U_0}{R_LC}+\frac{1}{2C}\bm{u}^T\bm{i}.
\end{split}
\right.
\end{equation}
where $r$ is parasitic phase resistance
(including voltage source internal resistance and impedance of switching elements in open state);
$R_L$ is the load resistance;
$L$ is phase inductor;
$C$ is output capacitor;
$U_0$ is output voltage;
$\bm{i}=
\begin{bmatrix}
i_a&i_b&i_c
\end{bmatrix}
^T$ are the input phase currents;
$\bm{U_g}=
\begin{bmatrix}
U_{g_1}&U_{g_2}&U_{g_3}
\end{bmatrix}
^T$ are the source voltages which have different magnitudes but the same frequency and phase
shift of $\frac{2\pi}{3}$ electrical degrees (with respect to each other);
and
$\bm{u}=
\begin{bmatrix}
u_1&u_2&u_3
\end{bmatrix}
^T$ are control signals.
The gain matrix and source voltage are as follows,
\begin{equation}\label{gain matrix}
B=
\begin{bmatrix}
    2&-1&-1\\-1&2&-1\\-1&-1&2
\end{bmatrix},\ \
\bm{U_g}=E
\begin{bmatrix}
   sin(\theta)\\sin(\theta-\frac{2}{3}\pi)\\sin(\theta+\frac{2}{3}\pi)
\end{bmatrix}
\end{equation}
where $E$ is the magnitude of the source voltages \citep{komurcugil1998lyapunov}.

For modeling and control design, it is convenient to transform three-phase variables
into a rotating $(d,q)$ frame. The transformed variables is defined as,
\begin{equation}\label{transformed variables}
\begin{split}
\bm{u_{dq}}=&
\begin{bmatrix}
u_d\\u_q
\end{bmatrix}
=T\bm{u},\ \
\bm{i_{dq}}=
\begin{bmatrix}
i_d\\i_q
\end{bmatrix}
=T\bm{i},\\
T\bm{U_g}=&
\begin{bmatrix}
U_{g_d}\\U_{g_q}
\end{bmatrix}.
\end{split}
\end{equation}
where
\begin{equation}\label{eqn:park transofrmation}
T=\frac{2}{3}
\begin{bmatrix}
cos(\omega t)&cos(\omega t-\frac{2}{3}\pi)&cos(\omega t+\frac{2}{3}\pi)\\
sin(\omega t)&sin(\omega t-\frac{2}{3}\pi)&sin(\omega t+\frac{2}{3}\pi)
\end{bmatrix}
\end{equation}
is the Park's transformation \cite{Bose2002,1187319}.

From (\ref{gain matrix}) and (\ref{eqn:park transofrmation}),
it follows that $U_{g_d}=0$ and $U_{g_q}=E$.
The dynamical model of the AC/DC
converter in the rotating $(d,q)$ frame can be
expressed as \citep{komurcugil1998lyapunov,845058,648960}
\begin{equation}\label{eqn:dqmodel}
\left\{
\begin{split}
\frac{di_d}{dt} =& -\frac{r}{L}i_d+\omega i_q-\frac{U_{0}}{2L}u_d,\\
\frac{di_q}{dt} =& -\frac{r}{L}i_q+\frac{E}{L}-\omega i_d-\frac{U_{0}}{2L}u_q, \\
\frac{dU_{0}}{dt} =& -\frac{U_{0}}{R_LC}+\frac{3(i_du_d+i_qu_q)}{4C}.
\end{split}
\right.
\end{equation}
where $\omega$ is the angular frequency of the source voltage. In the transformed state equation (\ref{eqn:dqmodel}),
the state vector is defined as
$\bm{x} =
\begin{bmatrix}
x_1&x_2&x_3
\end{bmatrix}
^T=
\begin{bmatrix}
i_d&i_q&U_0
\end{bmatrix}
^T$ and the control input vector
$\bm{u_{dq}} =
\begin{bmatrix}
u_d&u_q
\end{bmatrix}
^T$
are the switching functions
$\bm{u} =
\begin{bmatrix}
u_1&u_2&u_3
\end{bmatrix}
^T$ in synchronously rotating $(d,q)$ coordinate.
From the control point of view, the model of AC/DC converter in $(d,q)$ frame has the advantage
of reducing the current control task into a set-point tracking problem \citep{1187319}.
\subsection{Control Objectives}\label{sec:Objectives}
\begin{assump}\label{assump:1}
The phase voltage $\bm{U_g}$ and output voltage $U_0$ are measurable;
\end{assump}

The control objectives are as follows,
\begin{itemize}
  \item The input phase currents $i_a,i_b,i_c$ should be in phase with corresponding
  input source voltage $U_{g_1},U_{g_2},U_{g_3}$ in order to obtain a unity power factor.
  \item The DC component of the output voltage should be
  driven to some desired value ${U^{*}_0}$ while its AC component has
  to be attenuated to a given level.
\end{itemize}
\section{OBSERVER-BASED SLIDING MODE CONTROLLER DESIGN}\label{sec:ProblemSolve}
In observer-based sliding mode control, the real plant states are substituted by observer
states, reducing the number of measurements.
It has been shown that the performance of an observer-based sliding mode
controller can be improved significantly by keeping the plant system and
the observer system operating closely \citep{573474}.
Fig. \ref{fig:controlstructure} shows the structure of
the observer-based control system for three phase AC/DC converters,
which consists of two important parts: sliding mode current observer and controller system.
\begin{figure}[htpb]
  \centering
  \includegraphics[width=4.4in]{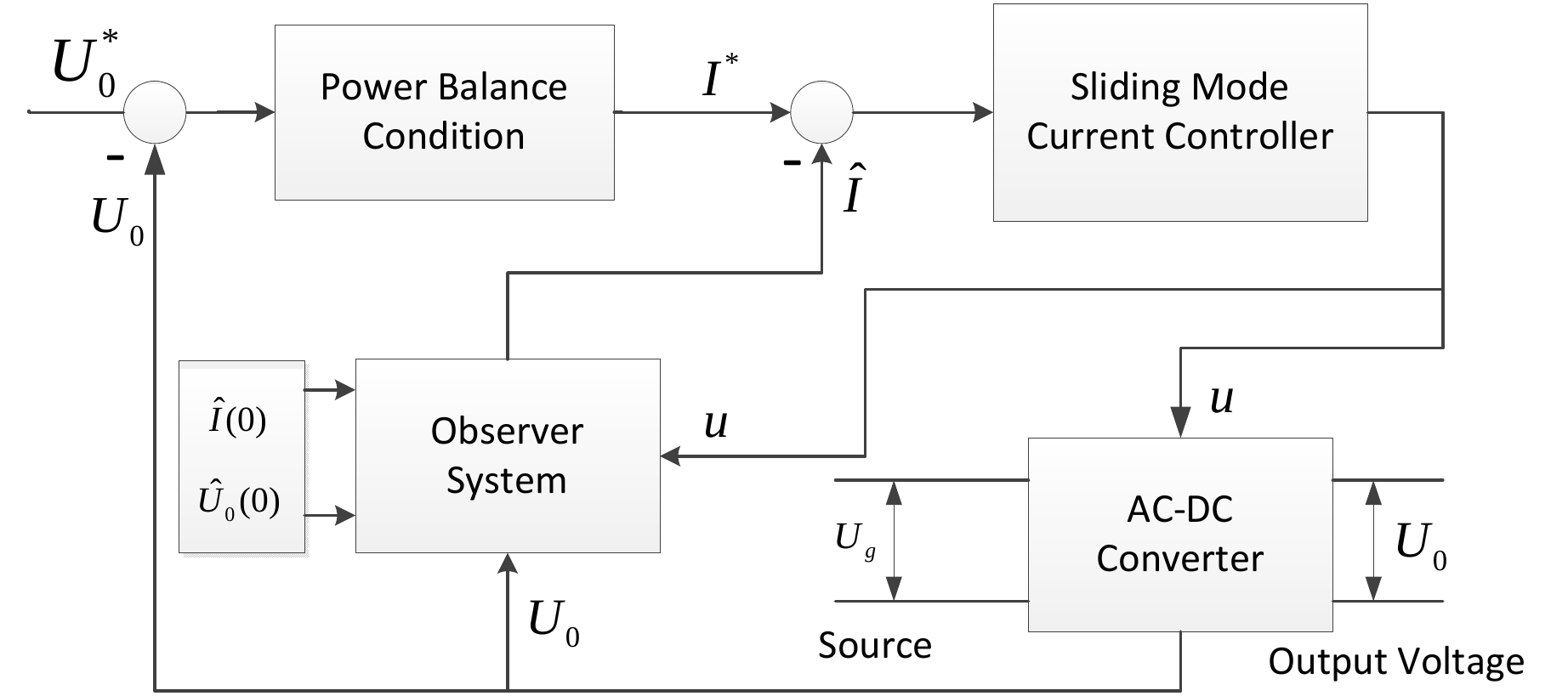}
  \caption{Observer-Based Control Structure of a Three Phase AC/DC Converter}
  \label{fig:controlstructure}
\end{figure}
\subsection{Super-Twisting Sliding Mode Observer Design}\label{subsec:sliding mode observer}
The proposed observer is designed in the following two steps,
\begin{enumerate}
  \item Analyzing the observability of the nonlinear system;
  \item Construction of the ST Observer;
\end{enumerate}
\subsubsection{Observability Analysis of the System}
In order to construct an observer for a system, it is necessary to verify the observability
of the system i.e. there exists the possibility of obtaining
the states of a system only from the knowledge of
its inputs and outputs up to time $t$ \citep{besancon2007nonlinear}.
Considering the following nonlinear system,
\begin{equation}\label{eqn:nonlinear system}
\left\{
\begin{split}
\dot{\bm{x}}=&\bm{f_u}(\bm{x},\bm{u}),\\
\bm{y}=&\begin{bmatrix}
h_1(x)&\cdots&h_p(x)
\end{bmatrix}
^T.
\end{split}
\right.
\end{equation}
where $\bm{x}\in \mathbb{R}^{n}$ are the state vectors, $\bm{u}\in \mathbb{R}^{m}$ are the bounded inputs, $\bm{y}\in \mathbb{R}^{p}$ are the outputs. Assume that the vector field $\bm{f_u}(\cdot,\cdot)$ is a sufficiently smooth function.
\begin{define}\label{define:observabilitycondition}\citep{hermann1977nonlinear}
The system described by (\ref{eqn:nonlinear system}) is locally observable
if the matrix defined by (\ref{matrix:observalibity}) satisfies
observability rank condition $dim(\mathrm{O})=n$ at a point $x_0$,
\begin{equation}\label{matrix:observalibity}
\begin{split}
\mathrm{O}=&
\begin{bmatrix}
    dL^0_{f_u}(h_1)&dL^0_{f_u}(h_2)&\cdots&dL^0_{f_u}(h_p)\\
    dL^1_{f_u}(h_1)&dL^1_{f_u}(h_2)&\cdots&dL^{1}_{f_u}(h_p)\\
    \vdots&\vdots&\ddots&\vdots\\
    dL^{n-1}_{f_u}(h_1)&dL^{n-1}_{f_u}(h_2)&\cdots&dL^{n-1}_{f_u}(h_p)
\end{bmatrix}
,
\end{split}
\end{equation}
where $L_{f_u}(h)$ denotes the Lie derivative of $h$ with respect to $f_u$.
\end{define}

Taking the output as $y=x_3=U_0$, the application of
Definition \ref{define:observabilitycondition}
leads to the following observability matrix,
\begin{equation}\label{matrix:observabilitymatrix}
\mathrm{O}=
\begin{bmatrix}
    0&0&1\\
    \frac{3u_d}{4C}& \frac{3u_q}{4C}&-\frac{1}{RC}\\
    -\varrho u_d-\frac{3\omega u_q}{4C}& \frac{3\omega u_d}{4C}-\varrho u_q&-\frac{3\|u_{dq}\|^2}{8LC}+(\frac{1}{R_LC})^2
\end{bmatrix}
,
\end{equation}
where $\varrho=\frac{3}{4C}(\frac{r}{L}+\frac{1}{R_LC})$, and $\|u_{dq}\|^2_2=u^2_d+u^2_q$.
Thus, it is possible to observe the currents $i_d,i_q$ from the measurement
of the output voltage $U_0$ when $ \|u_{dq}\|_2\neq 0$.
In the case of singular inputs $u_d=0,u_q=0$ which results the system (\ref{eqn:dqmodel})
into a reduced system with detectability property.
This property allows to construct an open-loop observer \citep{besancon2007nonlinear,sarinana2000nonlinear}.
\subsubsection{Construction of the Nonlinear Observer}
Assume that the only measured variable is the output DC voltage,
i.e. $y=x_3=U_0$. Thus, a ST SMO
for (\ref{eqn:dqmodel}) is constructed as follows
\begin{equation}\label{eqn:dqobservermodel}
\left\{
\begin{split}
\frac{d\hat{i}_d}{dt} =&-\frac{r}{L}\hat{i}_d + \omega \hat{i}_q-\frac{{U}_{0}}{2L}u_d+ k_1\mu(e_3),\\
\frac{d\hat{i}_q}{dt} = &-\frac{r}{L}\hat{i}_q-\omega \hat{i}_d-\frac{{U}_0}{2L}u_q+\frac{E}{L}
+k_2\mu(e_3),\\
\frac{d\hat{U}_{0}}{dt} =& -\frac{U_{0}}{R_LC}+\frac{3}{4C}(\hat{i}_du_d+\hat{i}_q u_q)+\mu(e_3).
\end{split}
\right.
\end{equation}
where the observation errors and STA are defined as
\begin{equation}\label{eqn:stw1}
\begin{split}
e_1 &= i_d-\hat{i}_d, e_2 = i_q-\hat{i}_q, e_3 = U_{0}-\hat{U}_{0},\\
\mu(e_3)&=\lambda |e_3|^{\frac{1}{2}}sign(e_3)+\alpha
\int_0^t sign(e_3)d\tau,
\end{split}
\end{equation}
with some positive constants $\lambda, \alpha$ respectively.

Then the error dynamics is given by
\begin{eqnarray}
\dot{e}_1 &=& -\frac{r}{L}e_1+\omega e_2-k_1\mu(e_3),
\label{eqn:dynamicobservererror1}\\
\dot{e}_2 &=& -\omega e_1-\frac{r}{L}e_2-k_2\mu(e_3),
\label{eqn:dynamicobservererror2}\\
\dot{e}_3 &=& \frac{3}{4C}(u_d e_1+u_q e_2)-\mu(e_3).
\label{eqn:dynamicobservererror3}
\end{eqnarray}
where $\mu(e_3)$ is the same as in equation (\ref{eqn:stw1}).

The design problem is transformed into determining $\alpha,\lambda$ and $k_1,k_2$
which are the tuning parameters to
ensure the convergence of the error system
(\ref{eqn:dynamicobservererror1},\ref{eqn:dynamicobservererror2},
\ref{eqn:dynamicobservererror3}).
\begin{proposition}\label{thm:2}
Consider the system (\ref{eqn:dynamicobservererror3}),
and assume that the control inputs are bounded,
\begin{equation}\label{ineqn:bounded condition}
    \|\bm{u_{dq}}\|_2\leq \sqrt{2},
\end{equation}
Then, the trajectories of the system (\ref{eqn:dynamicobservererror3}) converge
to zero in finite time, and the resulting reduced order
dynamics (\ref{eqn:dynamicobservererror1},\ref{eqn:dynamicobservererror2})
are exponentially stable,
if the gains $\alpha,\lambda$ of the STA
and tuning parameters $k_1,k_2$ are chosen as \cite{levant1998robust},
\begin{equation}\label{ineqn:alpha and ramada1}
\begin{split}
    \alpha >& F, \quad
    \lambda^2>\alpha,
\end{split}
\end{equation}
\begin{equation}\label{ineqn:k1k2}
\begin{split}
k_1=&
\begin{cases}
  \kappa u_d, \ \ \quad if \ \ |e_3|=0\\
  0. \ \ \quad\quad else
\end{cases},\\
k_2=&
\begin{cases}
  \kappa u_q, \ \ \quad if \ \ |e_3|=0\\
  0. \ \ \quad\quad else
\end{cases}
\end{split}
\end{equation}
where $F$ and $\kappa$ are some positive constants.
\end{proposition}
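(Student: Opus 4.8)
The plan is to decouple the two claims by exploiting the switching structure of the injection gains $k_1,k_2$. First I would recognize that the scalar error equation (\ref{eqn:dynamicobservererror3}) is exactly a super-twisting algorithm driven by a perturbation. Writing $\phi=\frac{3}{4C}(u_d e_1+u_q e_2)$ and isolating the integral state $v=\alpha\int_0^t sign(e_3)\,d\tau$ inside $\mu(e_3)$, the $e_3$ channel becomes $\dot e_3=-\lambda|e_3|^{\frac{1}{2}}sign(e_3)+(\phi-v)$ with $\dot v=\alpha\,sign(e_3)$. Introducing $\zeta=\phi-v$ casts the pair into the canonical form $\dot e_3=-\lambda|e_3|^{\frac{1}{2}}sign(e_3)+\zeta$, $\dot\zeta=-\alpha\,sign(e_3)+\dot\phi$, in which $\dot\phi$ is the matched disturbance whose size must be controlled.

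The key observation is that while $e_3\neq0$ the rule (\ref{ineqn:k1k2}) sets $k_1=k_2=0$, so during the reaching phase the current errors obey the autonomous system $\dot e_1=-\frac{r}{L}e_1+\omega e_2$, $\dot e_2=-\omega e_1-\frac{r}{L}e_2$, whose matrix has eigenvalues $-\frac{r}{L}\pm i\omega$ and is exponentially stable. Hence $e_1,e_2$ remain bounded and decaying, and together with the input bound (\ref{ineqn:bounded condition}) this produces a constant $F$ satisfying $|\dot\phi|\le F$. With such an $F$ available I would invoke the classical super-twisting convergence theorem \cite{levant1998robust}: conditions (\ref{ineqn:alpha and ramada1}), $\alpha>F$ and $\lambda^2>\alpha$, are precisely those forcing $(e_3,\zeta)$ to the origin in finite time, which establishes the sliding mode $e_3\equiv0$.

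Once $e_3=\dot e_3=0$ holds, I would read off the equivalent injection from (\ref{eqn:dynamicobservererror3}): setting $\dot e_3=0$ gives $\mu(e_3)_{eq}=\frac{3}{4C}(u_d e_1+u_q e_2)$. Substituting this into (\ref{eqn:dynamicobservererror1})--(\ref{eqn:dynamicobservererror2}) with the post-convergence gains $k_1=\kappa u_d$, $k_2=\kappa u_q$ reduces the remaining dynamics to $\dot{\bm e}=A\bm e$, $\bm e=[e_1\ e_2]^T$, with the input-dependent matrix $A=\begin{bmatrix}-(\frac{r}{L}+\beta u_d^2)&\omega-\beta u_d u_q\\-(\omega+\beta u_d u_q)&-(\frac{r}{L}+\beta u_q^2)\end{bmatrix}$ and $\beta=\frac{3\kappa}{4C}$. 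Rather than track the time-varying spectrum, I would use the common Lyapunov function $V=e_1^2+e_2^2$; a short computation gives $A+A^T=-2\big(\frac{r}{L}I+\beta\,\bm{u_{dq}}\bm{u_{dq}}^T\big)$, and since $\beta\,\bm{u_{dq}}\bm{u_{dq}}^T$ is positive semidefinite, $\dot V=\bm e^T(A+A^T)\bm e\le-\frac{2r}{L}\|\bm e\|^2=-\frac{2r}{L}V$ uniformly in the bounded inputs. This shows $\bm e$ decays exponentially at a rate at least $r/L$, proving exponential stability of the reduced-order dynamics (\ref{eqn:dynamicobservererror1})--(\ref{eqn:dynamicobservererror2}).

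The step I expect to be the main obstacle is making the perturbation bound $F$ fully rigorous. Because $\dot\phi$ contains the control derivatives $\dot u_d,\dot u_q$ as well as $\dot e_1,\dot e_2$ generated during reaching, one must argue that the coupled reaching-phase dynamics cannot grow faster than the super-twisting term contracts $e_3$, and one must justify that the discontinuous switching of $k_1,k_2$ at $e_3=0$ is admissible in the Filippov sense so that the equivalent-injection computation is legitimate. By comparison, the exponential-stability estimate is routine, since the inequality $\dot V\le-\frac{2r}{L}V$ follows directly from the sign-definiteness of $\frac{r}{L}I+\beta\,\bm{u_{dq}}\bm{u_{dq}}^T$.
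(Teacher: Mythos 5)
Your proposal follows essentially the same route as the paper's proof: the switching rule (\ref{ineqn:k1k2}) makes the $(e_1,e_2)$ dynamics autonomous and Hurwitz during the reaching phase, which bounds the perturbation so that Levant's super-twisting result gives finite-time convergence of $e_3$, after which the equivalent injection $\mu(e_3)_{eq}=\frac{3}{4C}(u_de_1+u_qe_2)$ together with $k_1=\kappa u_d$, $k_2=\kappa u_q$ yields the reduced-order system, whose exponential decay you establish with the same quadratic Lyapunov function as the paper (your $V=e_1^2+e_2^2$ is the paper's $V=e^TPe$, $P=\tfrac{L}{2r}I$, up to a constant factor, and your positive-semidefiniteness of $\beta\,u_{dq}u_{dq}^T$ is exactly the paper's $-\frac{3\kappa L}{4rC}(u_de_1+u_qe_2)^2\le 0$). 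The obstacle you flag --- that $\dot\phi$ contains $\dot u_d,\dot u_q$ and that the switching at $e_3=0$ needs Filippov justification --- is genuine but is equally unaddressed in the paper, which likewise asserts $|g|\le F$ from the bounds on $e_1,e_2,\dot e_1,\dot e_2$ and $\|u_{dq}\|_2$ alone.
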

\begin{proof}\label{eqn:finite time proof}
The proof is divided into two steps. In the first step, the equation
(\ref{eqn:dynamicobservererror3}) is proven to be finite time stable.
Then, the resulting reduced order
dynamics (\ref{eqn:dynamicobservererror1},\ref{eqn:dynamicobservererror2})
are proven to be exponentially stable with faster convergence rate
than its open loop dynamics (detectability property).
At the beginning, the two correction gains $k_1,k_2$ are zero due to $e_3\neq 0$.
The system (\ref{eqn:dynamicobservererror1},\ref{eqn:dynamicobservererror2}) becomes
\begin{equation}\label{eqn:open loop obs}
\begin{bmatrix}
\dot{e}_1\\
\dot{e}_2
\end{bmatrix}
=
\underbrace{\begin{bmatrix}
-\frac{r}{L}&\omega\\
-\omega&-\frac{r}{L}
\end{bmatrix}}_{A}
\begin{bmatrix}
{e}_1\\{e}_2
\end{bmatrix}
\end{equation}
It is easy to conclude that the system (\ref{eqn:open loop obs})
is exponentially stable given that $A$ is a Hurwitz matrix.
Consequently, we can conclude that $e_1,e_2,\dot{e}_1,\dot{e}_2$ are bounded
\begin{equation}\label{eqn:bounded condition1}
\begin{split}
|e_1(t)|\leq& |e_1(0)|, \ \ |e_2(t)|\leq |e_2(0)|,\\
|\dot{e}_1(t)|\leq& \frac{r}{L}|e_1(t_0)|+\omega |e_2(t_0)|, \\
|\dot{e}_2(t)|\leq& \omega |e_1(t_0)|+\frac{r}{L}|e_2(t_0)|.
\end{split}
\end{equation}

The condition of (\ref{ineqn:bounded condition}) is deduced from the Park
transformation (\ref{transformed variables}) and the control inputs
$\bm{u}$ which take values from the discrete set $\{-1,+1\}$,
\begin{equation}\label{eqn:bounded input}
\begin{split}
\|\bm{u_{dq}}\|_2\leq& \|T\|_2\|\bm{u}\|_2= \sqrt{\lambda_{max}(T^TT)}\|\bm{u}\|_2\\
\leq&\sqrt{\frac{2}{3}}\sqrt{3}=\sqrt{2},
\end{split}
\end{equation}
The equation (\ref{eqn:dynamicobservererror3}) can be rewritten as,
\begin{equation}
\left\{
\begin{split}
\dot{e}_3=& -\lambda |e_3|^{\frac{1}{2}}sign(e_3) + \varphi,\\
\dot{\varphi}=&-\alpha sign(e_3) + g(e_1,e_2,u_d,u_q),
\end{split}
\right.
\end{equation}
where $g(e_1,e_2,u_d,u_q)=\frac{3}{4C}\frac{d}{dt}(u_d e_1(t)+u_q e_2(t))$ is considered as
a bounded decreasing perturbation.

It follows from (\ref{eqn:bounded condition1}, \ref{eqn:bounded input})
that $\left\vert g \right\vert \leq F$, with a positive value $F$.
Given that the gains of the STA are chosen as (\ref{ineqn:alpha and ramada1}),
$e_3,\dot{e}_3$ converge to zero in finite time \citep{levant1998robust}.

Thereafter, the equivalent output-error injection $\mu(e_3)$
in (\ref{eqn:dynamicobservererror3})
can be obtained directly without any low pass filters,
\begin{equation}\label{eqn:equivalent injection1}
    \mu(e_3)=\frac{3}{4C}(u_d e_1+u_q e_2),
\end{equation}
When the sliding motion takes place ($e_3=0,\dot{e}_3=0$), the gains $k_1,k_2$ will switch
according to (\ref{ineqn:k1k2}).
Then, the equivalent injection (\ref{eqn:equivalent injection1}) is substituted into
the system (\ref{eqn:dynamicobservererror1},\ref{eqn:dynamicobservererror2})
in order to get the reduced order dynamics,
\begin{equation}\label{eqn:reduced order dynamics}
\begin{bmatrix}
\dot{e}_1\\
\dot{e}_2
\end{bmatrix}
=
\begin{bmatrix}
-\frac{r}{L}&\omega\\
-\omega&-\frac{r}{L}
\end{bmatrix}
\begin{bmatrix}
e_1\\
e_2
\end{bmatrix}
-\bar{A}
\begin{bmatrix}
e_1\\
e_2
\end{bmatrix}
\end{equation}
where $\bar{A}=\frac{3\kappa}{4C}
\begin{bmatrix}
u^2_d&u_du_q\\
u_du_q&u^2_q
\end{bmatrix}$.

Consider a candidate Lyapunov function for system (\ref{eqn:reduced order dynamics}) as,
\begin{equation}\label{candidate Lyapunov}
 V(e)=e^TPe,
\end{equation}
where $e^T=[e_1,e_2]$,
and $P=\begin{bmatrix}
\frac{L}{2r}&0\\
0&\frac{L}{2r}
\end{bmatrix}$ which satisfies the equation
${A}^TP+P{A}=-I$, where $I$ is an identity matrix.

Then, the time derivative of $V$ along the trajectories
of system (\ref{eqn:reduced order dynamics}) is given by,
\begin{equation}\label{derivative candidate Lyapunov}
\begin{split}
 \dot{V}(e)=&-e^Te-e^T(\bar{A}^TP+P\bar{A})e\\
 =&-e^2_1-e^2_2-\frac{3\kappa L}{4rC}(u_de_1+u_qe_2)^2\\
 \leq& -e^2_1-e^2_2,
\end{split}
\end{equation}
It should be noted from (\ref{derivative candidate Lyapunov})
that for any positive $\kappa$,
the system (\ref{eqn:reduced order dynamics})
is exponentially stable with faster convergence rate
compared with the open loop dynamic (\ref{eqn:open loop obs}).
The proof of the Proposition \ref{thm:2} is finished.
\end{proof}

In the next Subsection, an output feedback ST current control is designed in order
to achieve the objective of unity power factor
and ripple free output voltage.
\subsection{Output Feedback ST Sliding Mode Current Control}
\label{subsec:sliding mode current control}
The STA is popular among the Second Order Sliding Mode (SOSM) algorithms
because it is a unique absolutely continuous sliding mode algorithm,
therefore it does not suffer from the problem of
chattering  \citep{levant1993sliding,levant2007principles}.
The main advantages of the ST SMC \citep{acdc_converter} are as follows:
\begin{enumerate}
  \item It does not need the evaluation of the time derivative of the sliding variable;
  \item Its continuous nature suppresses arbitrary disturbances with bounded time derivatives;
\end{enumerate}
The control objectives are define in the Subsection \ref{sec:Objectives}.
\subsubsection{Desired current calculation with unity power factor}\label{subsec:referencecurrent}
Normally, the value of the inductance $L\ll1$ in the system (\ref{eqn:dqmodel}),
and the right-hand sides of the equations in (\ref{eqn:dqmodel})
have the values of the same order.
Hence $\frac{di_d}{dt}, \frac{di_q}{dt} \gg \frac{dU_0}{dt}$,
implying that the dynamics of $i_d$ and $i_q$
are much faster than those of $U_{0}$ \citep{acdc_converter}.
Provided that the fast dynamics are stable,
based on the singular perturbation theory \citep{khalil1996nonlinear},
let the first and second equations of (\ref{eqn:dqmodel})
be zero formally, then $u_d,u_q$ can be obtained,
\begin{equation}\label{eqn:idqref}
\left\{
\begin{split}
\frac{U_{0}}{2L}u_d=&-\frac{r}{L}i^*_d+\omega i^{*}_q,\\
\frac{U_{0}}{2L}u_q=&-\frac{r}{L}i^*_q-\omega i^{*}_d+\frac{E}{L},\\
\frac{dU_{0}}{dt}=&-\frac{U_{0}}{R_LC}+\frac{3}{4C}(i^{*}_du_d+i^{*}_qu_q).
\end{split}
\right.
\end{equation}
Based on equation (\ref{eqn:idqref}), the reference currents $i^*_d,i^*_q$
will be determined depending on the desired system performance.
Substitute the first and second equation
into the third equation of (\ref{eqn:idqref}) yields,
\begin{equation}\label{eqn:U0}
    \frac{dU_{0}}{dt}=-\frac{U_{0}}{R_LC}+
    \frac{3}{2}\frac{i^{*}_qE-E_{r}}{U_{0}C},
\end{equation}
where $E_{r}=r({i^{*}_d}^{2}+{i^{*}_q}^{2})$ represents
the energy consumed by parasitic phase resistance.

Considering the Power-Balance condition \citep{komurcugil1998lyapunov},
\begin{equation}\label{eqn:power-balance}
   \frac{3}{2}(i^{*}_qE-E_r)=\frac{U^{*2}_0}{R_L},
\end{equation}
The reference current $i^{*}_d$ is set to zero for guaranteeing
\emph{Unity-Power-Factor}  which leads to the following calculation
of the reference current $i^*_q$,
\begin{equation}\label{eqn:iqreference}
    i^*_q=\frac{E}{2r}\pm\frac{1}{2}\sqrt{\frac{E^2}{r^2}-\frac{8U^{*2}_0}{3R_Lr}},
\end{equation}
under constraint for desired output voltage $U^*_0$,
\begin{equation}\label{eqn:voltage constraint}
    U^*_0\leq E \sqrt{\frac{3R_L}{8r}},
\end{equation}

Finally, $i^{*}_d$ and $i^{*}_q$ are obtained
as follows (due to minimal energy consumption),
\begin{equation}\label{eqn:refercurrents}
\left\{
\begin{split}
i^{*}_d=&0,\\
i^*_q=&\frac{E}{2r}-\frac{1}{2}\sqrt{\frac{E^2}{r^2}-\frac{8U^{*2}_0}{3R_Lr}},.
\end{split}
\right.
\end{equation}
As tracking error vector approaches zero, i.e., $i_d\rightarrow i^*_d$
and $i_q\rightarrow i^*_q$, the zero dynamics have the form \cite{1187319}
\begin{equation}\label{eqn:dynamicofvoltage}
    \frac{dU_{0}}{dt}=-\frac{U_0}{R_LC}+\frac{U^{*2}_0}{R_LCU_0},
\end{equation}
Define a new variable $Z=U^2_0$,
equation (\ref{eqn:dynamicofvoltage}) can be rewritten as,
\begin{equation}\label{eqn:Z variable}
    \frac{dZ}{dt}=-\frac{2}{R_LC}(Z-U^{*2}_0),
\end{equation}
For a positive initial value of the output voltage, the steady-state value of $U_0$
will converge to the desired level $U^*_0$ with
the time constant $\frac{R_LC}{2}$ exponentially.
Therefore, the tracking of the reference current achieves the regulation of output voltage to the
desired value $U^*_0$ with a unity power factor.
In the following Subsection \ref{subsub:current control},
the design of current control based on STA is proposed.
\subsubsection{Output Sliding Mode Current Control}\label{subsub:current control}
We will now design ST SMC for the system (\ref{eqn:dqmodel}) based
on the proposed observer (\ref{eqn:dqobservermodel}).
The switching variables for the current control are defined as,
\begin{equation}\label{eqn:switchingfuction}
\left\{
\begin{split}
s_d=&i_{d}^{*}-{i}_d=i_{d}^{*}-{\hat{i}}_d+\hat{i}_d-{i}_d=\hat{s}_d-e_1,\\
s_q=&i_{q}^{*}-{i}_q=i_{q}^{*}-{\hat{i}}_q+\hat{i}_q-{i}_q=\hat{s}_q-e_2.
\end{split}
\right.
\end{equation}
where $i^{*}_{d}$ and $i^{*}_{q}$ are the desired
values of the currents in the $(d,q)$ coordinate frame
and $e_1,e_2$ are observation errors defined in (\ref{eqn:stw1}).
The desired value is selected to provide the DC power balance
between the input power and the output power.

Taking the first time derivative of $\bm{s_{dq}}=[s_d,s_q]^T$ yields,
\begin{equation}\label{timederivativeS}
\dot{\bm{s}}_{\bm{dq}}=
\begin{bmatrix}
\frac{r}{L}{i}_d-\omega {i}_q\\
\dot{i}^{*}_{q}+\frac{r}{L}{i}_q-\frac{E}{L}+\omega {i}_d
\end{bmatrix}
+\frac{U_{0d}}{2L}
\begin{bmatrix}
u_d\\u_q
\end{bmatrix}
,
\end{equation}

The equation (\ref{timederivativeS}) can be rewritten as,
\begin{equation}\label{eqn:supertwisting}
\begin{split}
\begin{bmatrix}
\dot{s}_d\\
\dot{s}_q
\end{bmatrix}
=&
\begin{bmatrix}
-\frac{r}{L}s_d+\omega s_q\\
-\omega s_d-\frac{r}{L}s_q
\end{bmatrix}
+
\begin{bmatrix}
F_d\\
F_q
\end{bmatrix}
+\frac{U_{0d}}{2L}
\begin{bmatrix}
u_d\\
u_q
\end{bmatrix}
,
\end{split}
\end{equation}
where $F_d=-\omega{i}^*_q$,
$F_q=\dot{i}^*_q+\frac{r}{L}{i}^*_q-\frac{E}{L}$.

The control objective is to force the sliding variable $s_d,s_q$
to zero. Design  controls $u_d,u_q$ as follows,
\begin{equation}\label{eqn:stw_controller1}
\begin{split}
\bm{{u}_{dq}}=&
\begin{bmatrix}
{u}_d\\
{u}_q
\end{bmatrix}
=\frac{2L}{U_{0d}}
\begin{bmatrix}
\frac{r}{L}\hat{s}_d-\omega \hat{s}_q-\mu(\hat{s}_d)-F_d\\
\omega \hat{s}_d+\frac{r}{L}\hat{s}_q-\mu(\hat{s}_q)-F_q
\end{bmatrix},
\end{split}
\end{equation}
where $\mu(\hat{s}_d)$ and $\mu(\hat{s}_q)$ take the form of 
(\ref{eqn:stw1}).
\begin{remark}
Since the control signals (\ref{eqn:stw_controller1}) are continuous,
Pulse Width Modulation (PWM) technique is used to implement
the controller on practical platforms \citep{sabanovic_variable_2004}.
\end{remark}

Then, Lyapunov analysis is used to prove the convergence of the
system (\ref{eqn:supertwisting}) under the controller
(\ref{eqn:stw_controller1}) where observation errors are taken into account.
\begin{theorem}\label{thm:3}
The output ST SMC
(\ref{eqn:stw_controller1})
ensures the exponential convergence of the state trajectories of
the system (\ref{eqn:supertwisting}) to the origin $s_d=0$, $s_q=0$,
if the gains of STA $\mu(\hat{s}_d),\mu(\hat{s}_q)$ satisfy
the following conditions \citep{levant1998robust},
\begin{equation}\label{ineqn:alpha and ramada}
\begin{split}
    \alpha_d >&F_d, \quad  \lambda^2_d > \alpha_d,\\
    \alpha_q >&F_q, \quad  \lambda^2_q > \alpha_q,
\end{split}
\end{equation}
where $F_d$ and $F_q$ are some positive constants.
\end{theorem}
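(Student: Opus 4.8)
The plan is to substitute the control law (\ref{eqn:stw_controller1}) directly into the sliding-variable dynamics (\ref{eqn:supertwisting}) and to exhibit the closed loop as a perturbed super-twisting system, then to combine the finite-time behaviour of the super-twisting part with the exponential decay of the observation errors already established in Proposition \ref{thm:2}. First I would insert $\frac{U_{0d}}{2L}u_d=\frac{r}{L}\hat{s}_d-\omega\hat{s}_q-\mu(\hat{s}_d)-F_d$ (and its $q$-counterpart) into (\ref{eqn:supertwisting}). The feedforward terms $F_d,F_q$ cancel, and using the defining relations $\hat{s}_d=s_d+e_1$, $\hat{s}_q=s_q+e_2$ from (\ref{eqn:switchingfuction}) the linear coupling is annihilated as well, leaving the compact closed loop
\[
\dot{s}_d=-\mu(\hat{s}_d)+\frac{r}{L}e_1-\omega e_2,\qquad
\dot{s}_q=-\mu(\hat{s}_q)+\omega e_1+\frac{r}{L}e_2.
\]

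Next I would pass to the observer variables $\hat{s}_d,\hat{s}_q$, since the super-twisting nonlinearity $\mu(\cdot)$ acts on these rather than on $s_d,s_q$. Adding the reduced-order error dynamics (\ref{eqn:reduced order dynamics}) to the first equation cancels the linear error terms and yields $\dot{\hat{s}}_d=-\mu(\hat{s}_d)-\frac{3\kappa}{4C}(u_d^2 e_1+u_d u_q e_2)$, a genuine super-twisting equation driven by an observer-dependent forcing term. Casting this into the canonical form $\dot{\hat{s}}_d=-\lambda_d|\hat{s}_d|^{1/2}\mathrm{sign}(\hat{s}_d)+\phi_d$, $\dot{\phi}_d=-\alpha_d\,\mathrm{sign}(\hat{s}_d)+\psi_d$, the residual $\psi_d$ is the time derivative of the forcing term. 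Because $e_1,e_2,\dot{e}_1,\dot{e}_2$ are bounded by Proposition \ref{thm:2} (cf. (\ref{eqn:bounded condition1})) and the inputs are bounded, $\psi_d$ admits a finite bound $|\psi_d|\le F_d$, and symmetrically $|\psi_q|\le F_q$. Under the gains (\ref{ineqn:alpha and ramada}), the super-twisting convergence result of \citep{levant1998robust} then forces $\hat{s}_d,\hat{s}_q$ and their derivatives to zero in finite time. The exponential claim is closed by the last observation: once the surface $\hat{s}_d=\hat{s}_q=0$ is reached, (\ref{eqn:switchingfuction}) gives $s_d=-e_1$ and $s_q=-e_2$, so the residual motion of the true sliding variables is governed entirely by the observation errors, which by Proposition \ref{thm:2} decay exponentially through the estimate $\dot{V}\le-e_1^2-e_2^2$ of (\ref{derivative candidate Lyapunov}); hence $s_d,s_q\to0$ exponentially.

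I expect the main obstacle to be the rigorous bounding of the perturbation $\psi_d$ in the second step: it is the derivative of a product of the control inputs and the observation errors, so a clean bound $|\psi_d|\le F_d$ requires control of $\dot{e}_1,\dot{e}_2$ (available from (\ref{eqn:bounded condition1})) together with boundedness of the inputs and their derivatives, and one must ensure the resulting constant coincides with the $F_d$ entering the gain condition. A secondary delicacy is the observer/controller interconnection: one should argue in cascade, exploiting that the error system of Proposition \ref{thm:2} feeds the controller's super-twisting block as a vanishing disturbance and not conversely, so that the finite-time and exponential estimates can be chained without circularity.
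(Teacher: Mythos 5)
Your proposal follows essentially the same route as the paper's proof: substitute the control into (\ref{eqn:supertwisting}), rewrite the closed loop in the observer variables $\hat{s}_d,\hat{s}_q$ as a super-twisting system perturbed by terms in $e_1,e_2,\dot{e}_1,\dot{e}_2$, bound the derivative of that perturbation by $F_d,F_q$ using (\ref{eqn:bounded condition1}) and (\ref{eqn:reduced order dynamics}), invoke \citep{levant1998robust} for finite-time convergence of $\hat{s}_d,\hat{s}_q$, and then recover exponential convergence of $s_d,s_q$ from the exponential decay of $e_1,e_2$. Your explicit substitution of the reduced-order error dynamics to expose the forcing term, and your remarks on the cascade structure, are merely a more detailed rendering of the same argument, so the proposal is correct and matches the paper.
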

\begin{proof}
Substitute the control (\ref{eqn:stw_controller1})
into system (\ref{eqn:supertwisting}) yields,
\begin{equation}\label{eqn:new form}
\left\{
\begin{split}
\dot{\hat{s}}_{d}=&-\mu(\hat{s}_d)-\frac{r}{L}e_1+\omega e_2+\dot{e}_1,\\
\dot{\hat{s}}_{q}=&-\mu(\hat{s}_q)-\omega e_1 - \frac{r}{L}e_2 +\dot{e}_2.
\end{split}
\right.
\end{equation}
where $e_{12}^T=\begin{bmatrix}
e_1&e_2
\end{bmatrix}$
are the observation errors.
From the result of Proposition \ref{thm:2},
$\lim_{t\rightarrow \infty}e_1(t)=0$ and
$\lim_{t\rightarrow \infty}e_2(t)=0$.
It follows from (\ref{eqn:bounded condition1}, \ref{eqn:reduced order dynamics}) that
the time derivatives of the terms $-\frac{r}{L}e_1+\omega e_2+\dot{e}_1$
and $-\omega e_1 - \frac{r}{L}e_2 +\dot{e}_2$ are bounded,
\begin{equation}\label{eqn:bounded condition}
\begin{split}
  &\left\vert \frac{d}{dt}\left(-\frac{r}{L}e_1+\omega e_2+\dot{e}_1\right)\right\vert \leq F_d,\\
  &\left\vert\frac{d}{dt}\left(-\omega e_1 - \frac{r}{L}e_2 +\dot{e}_2\right)\right\vert \leq F_q.
\end{split}
\end{equation}
where $F_d, F_q$ are some positive constants.

From the result of \cite{levant1998robust},
the trajectories of the system (\ref{eqn:new form}) converge
to zero $\hat{s}_d=0,\hat{s}_q=0$
in finite time under the gains chosen as (\ref{ineqn:alpha and ramada}).
It can be concluded that
$s_d,s_q$ converge to zero exponentially since
$s_d,s_q$ converge to $\hat{s}_d,\hat{s}_q$ exponentially.
Thus, Theorem \ref{thm:3} is proven.
\end{proof}

The proposed observer-based control law (\ref{eqn:stw_controller1}) requires
real-time evaluation of sliding variables $\hat{s}_d=i^*_d-\hat{i}_d,\hat{s}_q=i^*_q-\hat{i}_q$.
However, the current reference $i^*_q$ in (\ref{eqn:refercurrents})
requires the knowledge of load resistance $R_L$ and
parasitic phase resistance $r$.
Due to this fact, a ST parameter observer is employed to
estimate the value of load resistance while phase resistance
is assumed to have nominal value \citep{unitypowerfactor}.
\section{ST PARAMETER OBSERVER DESIGN AND POWER FACTOR ESTIMATION}\label{sec:ParameterObservers}
\subsection{Load Resistance Estimation}
In this work, the load resistance $R_L$ in the system is assumed to vary around its
nominal value $R_0$.
The last differential equation in (\ref{eqn:dqmodel})
is used to construct the observer dynamics using
ST sliding mode technique
\begin{equation}\label{eqn:loadestimation2}
\begin{split}
\frac{d\hat{U}_{0}}{dt} =& -\frac{U_{0}}{R_0C}+\frac{3}{4C}(\hat{i}_du_d+\hat{i}_q u_q)
+\mu(\tilde{U}_0),
\end{split}
\end{equation}
where $R_0$ is the nominal value of the load resistance.

Consider the observation error $\tilde{U}_0=U_0-\hat{U}_0$
which has the following dynamics according to
(\ref{eqn:dqmodel}) and (\ref{eqn:loadestimation2}),
\begin{equation}\label{eqn:loadestimation3}
\begin{split}
    \dot{\tilde{U}}_0=&-\mu(\tilde{U}_0)-\frac{U_0}{C}\left(\frac{1}{R_L}-\frac{1}{R_0}\right)\\
    +&\frac{3}{4C}(e_1u_d+e_2u_q)=-\mu(\tilde{U}_0)+\Psi_{R_L},
\end{split}
\end{equation}
where $\mu(\cdot)$ is the STA defined in (\ref{eqn:stw1}).
From Proposition \ref{thm:2}, $\lim\limits_{t \to \infty}{e_1}=0$,
$\lim\limits_{t \to \infty }{e_2}=0$ and
$\lim\limits_{t \to \infty}{\Psi_{R_L}}=
-\frac{U_0}{C}\left(\frac{1}{R_L}-\frac{1}{R_0}\right)$.
Sliding mode will be enforced
with appropriate values of $\lambda,\alpha$
providing that the first time derivative of the term $\Psi_{R_L}$ is bounded
\cite{levant1993sliding,levant2007principles}.
It follows that when a sliding motion takes place,
\begin{equation}\label{eqn:sliding motion}
-\mu(\tilde{U}_0)-\frac{U_0}{C}\left(\frac{1}{R_L}-\frac{1}{R_0}\right)
    +\frac{3}{4C}(e_1u_d+e_2u_q)=0,
\end{equation}
The load resistance $R_L$ can therefore be estimated in terms of its
nominal value and observer's output with appropriate parameters exponentially,
\begin{equation}\label{eqn:estimationofloadresistance}
    \hat{R}_L=\frac{R_0U_0}{U_0-R_0C\mu(\tilde{U}_0)},
\end{equation}
since
\begin{equation}\label{eqn:exponentially}
    \lim_{t\rightarrow \infty}\|\hat{R}_L-R_L\|=0,
\end{equation}
\subsection{Power Factor Estimation}\label{sec:PFC}
The estimation of power factor value is very important for analyzing the quality
of the proposed control law.
The definition of power factor is given as the following formula,
\begin{equation}\label{eqn:powerfactor}
    PF=PF_{h}\cdot PF_{d}=
    \frac{RMS(i_{1}(t))}{RMS(i(t))}\cdot cos(\phi),
\end{equation}
where $PF_{h}$ is the harmonic distortion and $PF_{d}$ is the displacement
between input phase current and source voltage.
The ideal condition of \emph{Unity Power Factor} corresponds to no
harmonic distortion(phase current has only main harmonic) and
no phase shift between input phase current and main source voltage \citep{unitypowerfactor}.

The $RMS(\cdotp)$ stands for the root-mean-square quantity which
is calculated as follows,
\begin{equation}\label{root-mean-square}
RMS(i(t))=\sqrt{\frac{1}{T}\int^T_0i^2(\tau)d\tau}
\end{equation}
where $T$ is the period of the phase current $i(t)$,
$RMS(i_1(t))$ characterizes the fundamental component of the current (root-mean-square)
and $RMS(i(t))$ corresponds to the total current (root-mean-square).

The overall power factor for three-phase AC/DC converter
is calculated from the estimates of phase current which
will be a product of the three single phase power factor values,
\begin{equation}\label{eqn:totalpowerfactor}
\begin{split}
PF_{total}=&PF_{1}\cdot PF_{2}\cdot PF_{3},
\end{split}
\end{equation}

The structure of estimation for the single-phase power factor
in MATLAB/SIMULINK is shown in Fig. \ref{fig:singlepf},
which includes two important modules:
\emph{Fourier analysis} and \emph{Harmonic analysis}.
The first module gives the phases of main-frequency input current and
source phase voltage respectively.
The other module is used to measure the total harmonic distortion of input current.
\begin{figure}[!htb]
  \centering
  \includegraphics[width=4.4in]{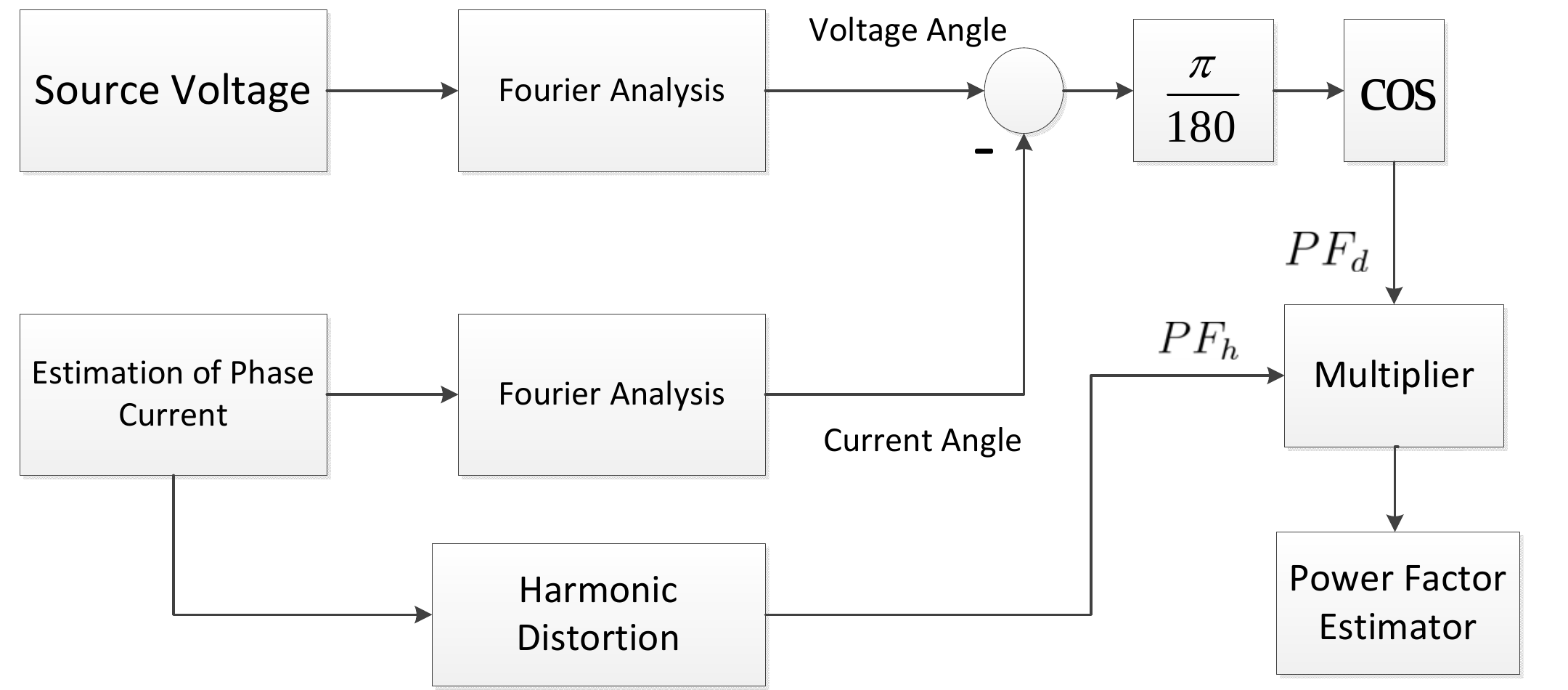}
  \caption{Structure of single-phase power factor calculation}
  \label{fig:singlepf}
\end{figure}
\section{SIMULATION RESULTS}
Simulation results are carried out on the proposed three phase
AC/DC boost power converter,
the parameters used in simulation are shown in Table \ref{table:2}.
Load resistance and frequency are varied to test controller's
ability to handle with varying conditions at time $1.0s$ and $1.5s$ respectively.
\begin{table}
\tbl{Parameters Used For Simulation.}
{\begin{tabular}{@{}llllll}
\toprule
$r$ & 0.02              & $\Omega$
&$C$   & 100               &$u$F\\
$L$   & $2$       & mH
&$w$   &$150\pi\stackrel{t=1.5s}{\longrightarrow}300\pi$  & rad/s\\
$R$ &$50\stackrel{t=1.0s}{\longrightarrow}40$        & $\Omega$
&$E$   &150       & V\\
${U_{0}}^{*}$ &650 &V
&$U_{0}(0)$ &5 &V\\
\botrule
\end{tabular}}
\label{table:2}
\end{table}

The simulation results of the proposed observer-based ST SMC compared
with linear PI regulator \citep{767067}
are shown in Figs \ref{currentandvoltage}-\ref{powerfactorcalculation}.
Input phase current along with the corresponding source voltage are shown in
Fig. \ref{currentandvoltage}.
From Fig. \ref{fig:cvfirst} and Fig. \ref{fig:cvsecond},
both of the controller make no phase shift between
the input current and corresponding source voltage,
however the PI Control results in higher harmonics compared to the ST SMC.

Fig. \ref{outputvoltage} shows the output voltage performance of the AC/DC converter.
From Fig. \ref{fig:dcvfirst} and Fig. \ref{fig:dcvsecond},
it is seen that the proposed observer-based ST SMC is able to regulate the output voltage
to the desired level under the condition of load variation.
A good estimate for the load resistance is shown in Fig. \ref{fig:estimationloadresistance}.
However, the PI Control results in higher fluctuation around some DC level,
and higher voltage overshoot compared with the ST SMC.
It should be noted that the PI control is not able to demonstrate
its robustness with respect to load variation, due to the fact
that the gains $k_p,k_i$ of the PI control depend
on the load resistance $R_L$ \citep{767067}.

Fig. \ref{powerfactorcalculation} shows the separate power factor value of
each phase and their product as a combined characteristic of the AC/DC converter.
From Fig. \ref{fig:pfsupertwisting} and
Fig. \ref{fig:pftraditional}, it is shown that the power factor value are more that 97\%
in case of observer-based ST SMC,
while the PI Control results in less value and more oscillations
compared to the ST SMC.
The proposed observer-based ST SMC is proven to be able to produce a power factor value that
was always more than 97\%, and less sensitive to the changing conditions.
\section{CONCLUSIONS}
An observer-based ST SMC is proposed in this paper for the AC/DC boost converters.
The use of observer reduces the number of current sensors, decreases the system cost,
volume and provides robustness to
the change of operational condition (e.g. in load resistance $R_L$ and
frequency of the source voltage $\omega$).
The proposed observer-based ST SMC maintains the power factor close to unity.
A strong Lyapunov function is introduced to prove the stability of the
observer and controller as a whole system.
Simulation results show that the observer-based controller performs better,
compared to conventional PI control, with less overshoot and less sensitivity
to disturbance and parametric uncertainty.
\begin{figure}[!htb]%
\begin{center}
\subfigure[The case of observer-based ST SMC]
{\label{fig:cvfirst}\includegraphics[width=2.5in]{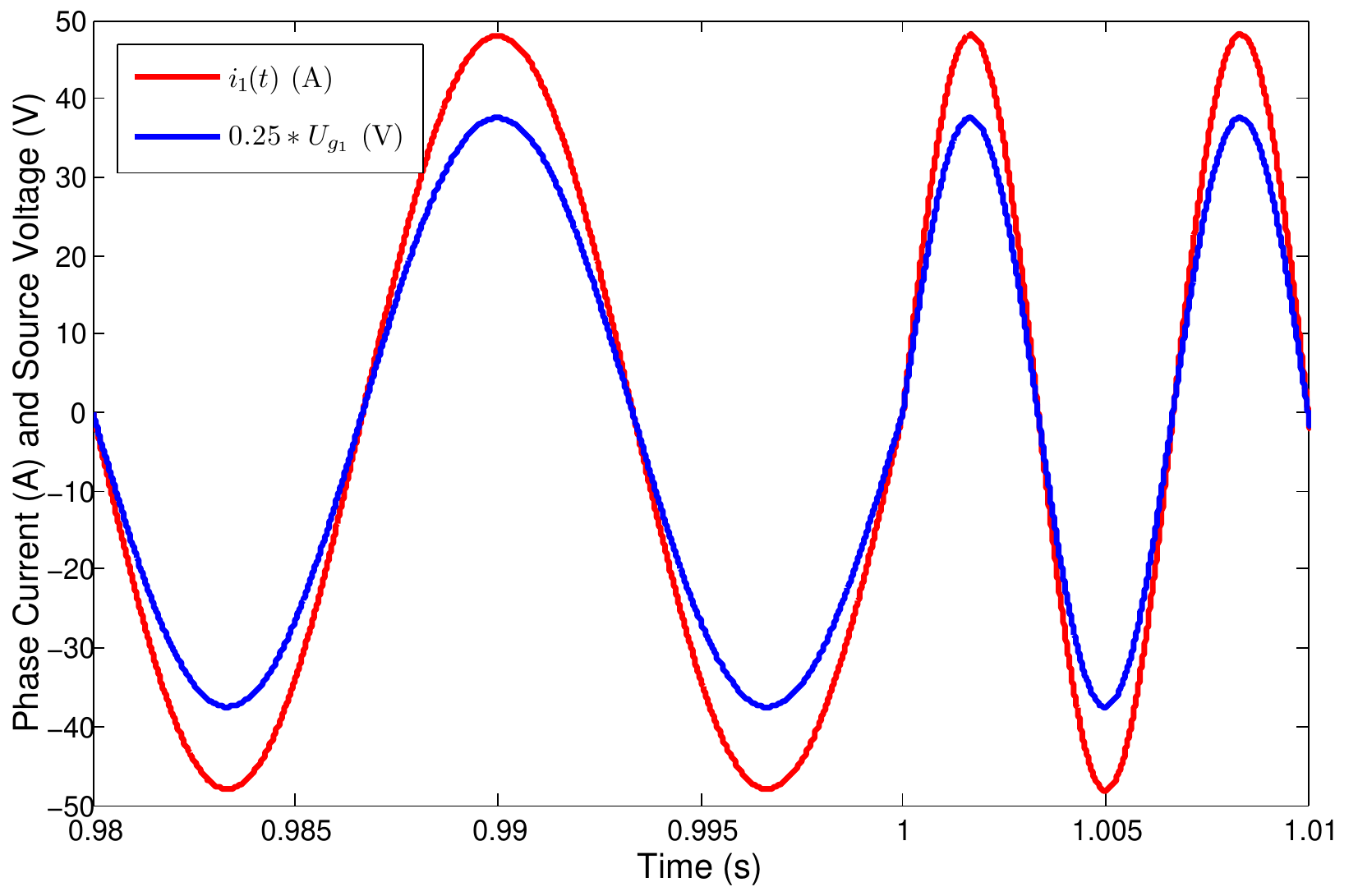}}
\quad
\subfigure[The case of PI Control]
{\label{fig:cvsecond}\includegraphics[width=2.5in]{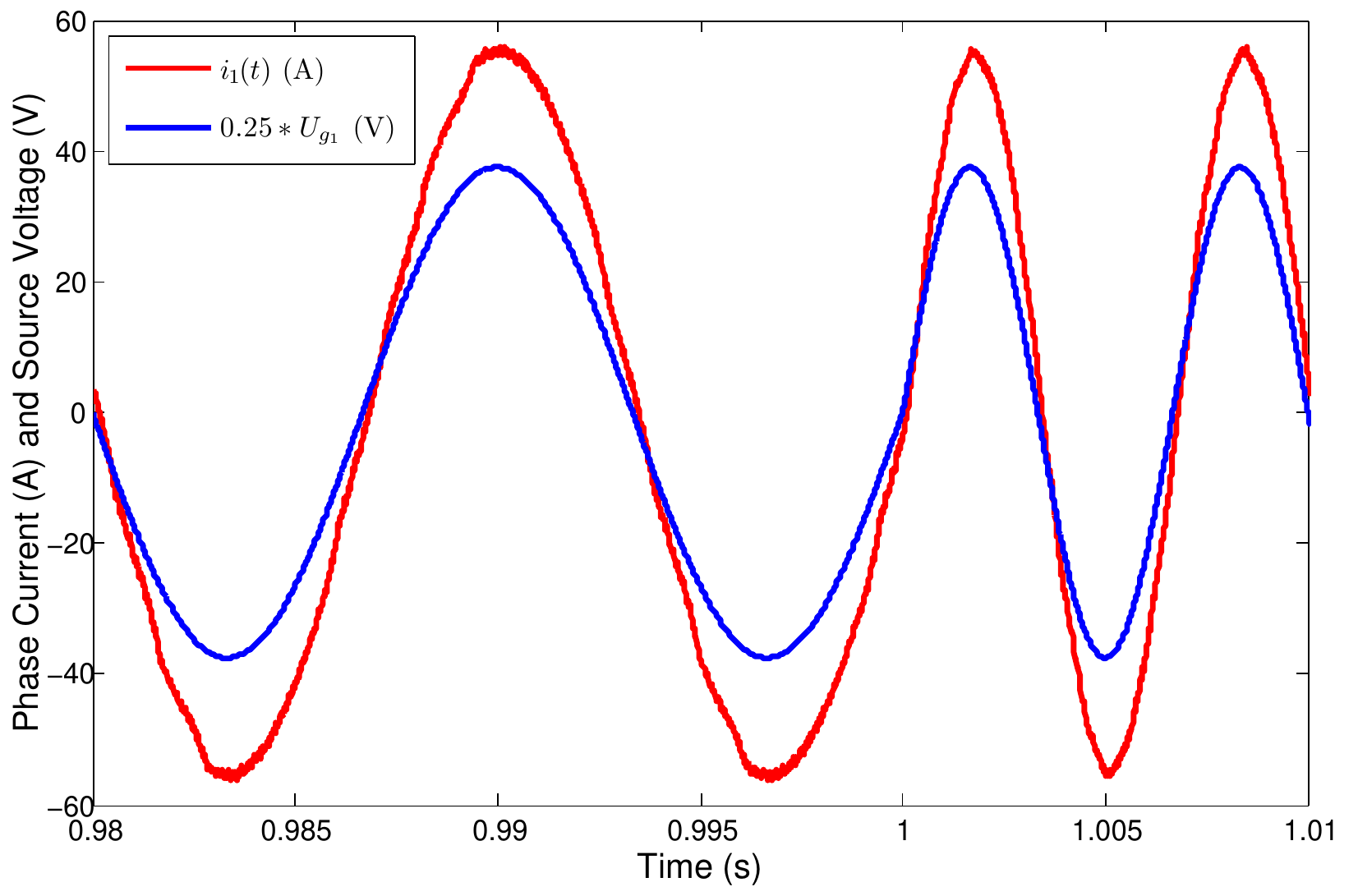}}
\caption{Phase current and source voltage ($\times0.25$)}
\label{currentandvoltage}
\end{center}
\end{figure}
\begin{figure}[!htb]%
\begin{center}
\subfigure[The case of observer-based ST SMC]
{\label{fig:dcvfirst}\includegraphics[width=2.5in]{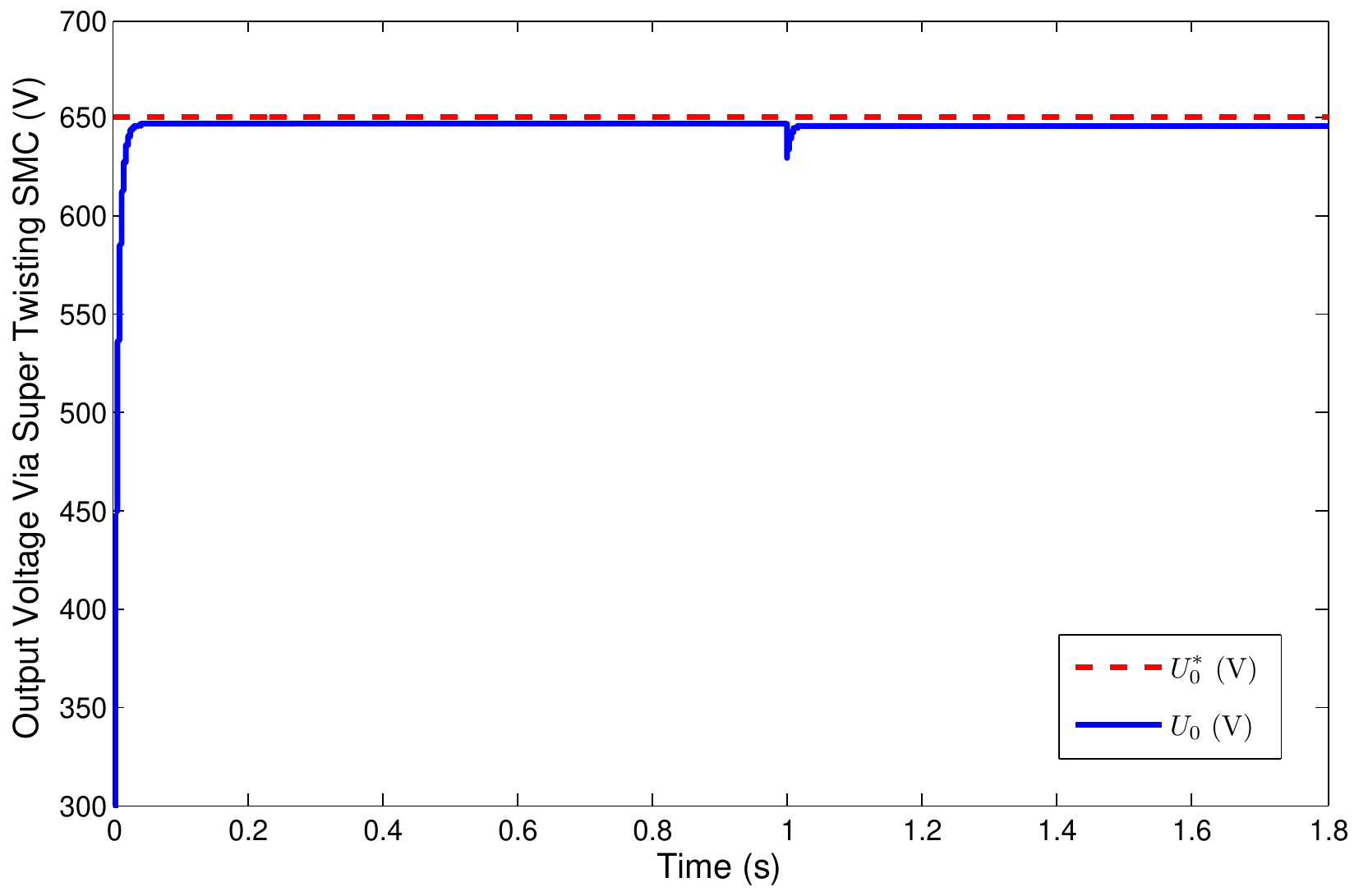}}
\quad
\subfigure[The case of PI Control]
{\label{fig:dcvsecond}\includegraphics[width=2.5in]{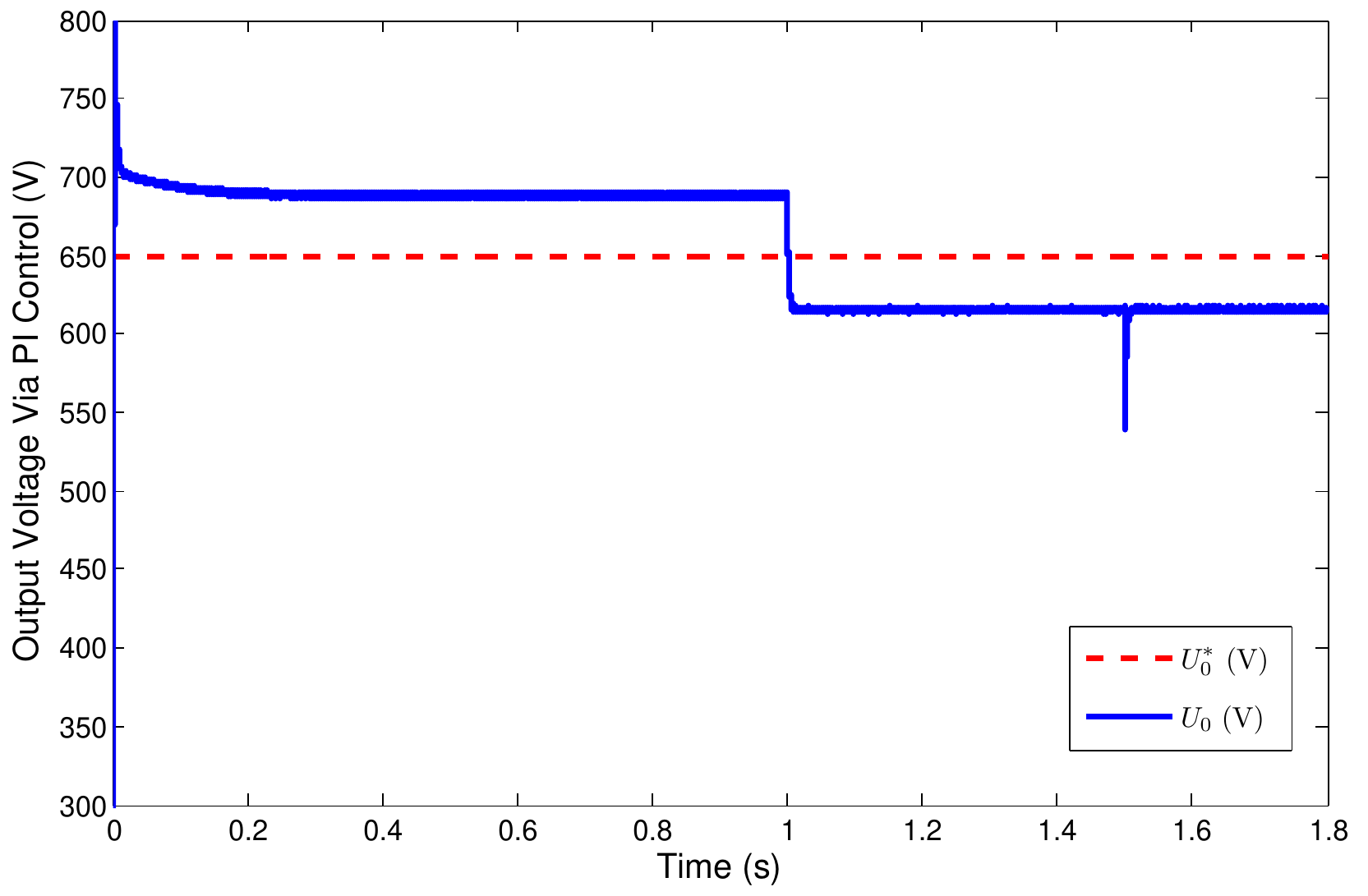}}
\caption{Output Voltage Performance}
\label{outputvoltage}
\end{center}
\end{figure}
\begin{figure}[!htb]
  \centering
  \includegraphics[width=4in]{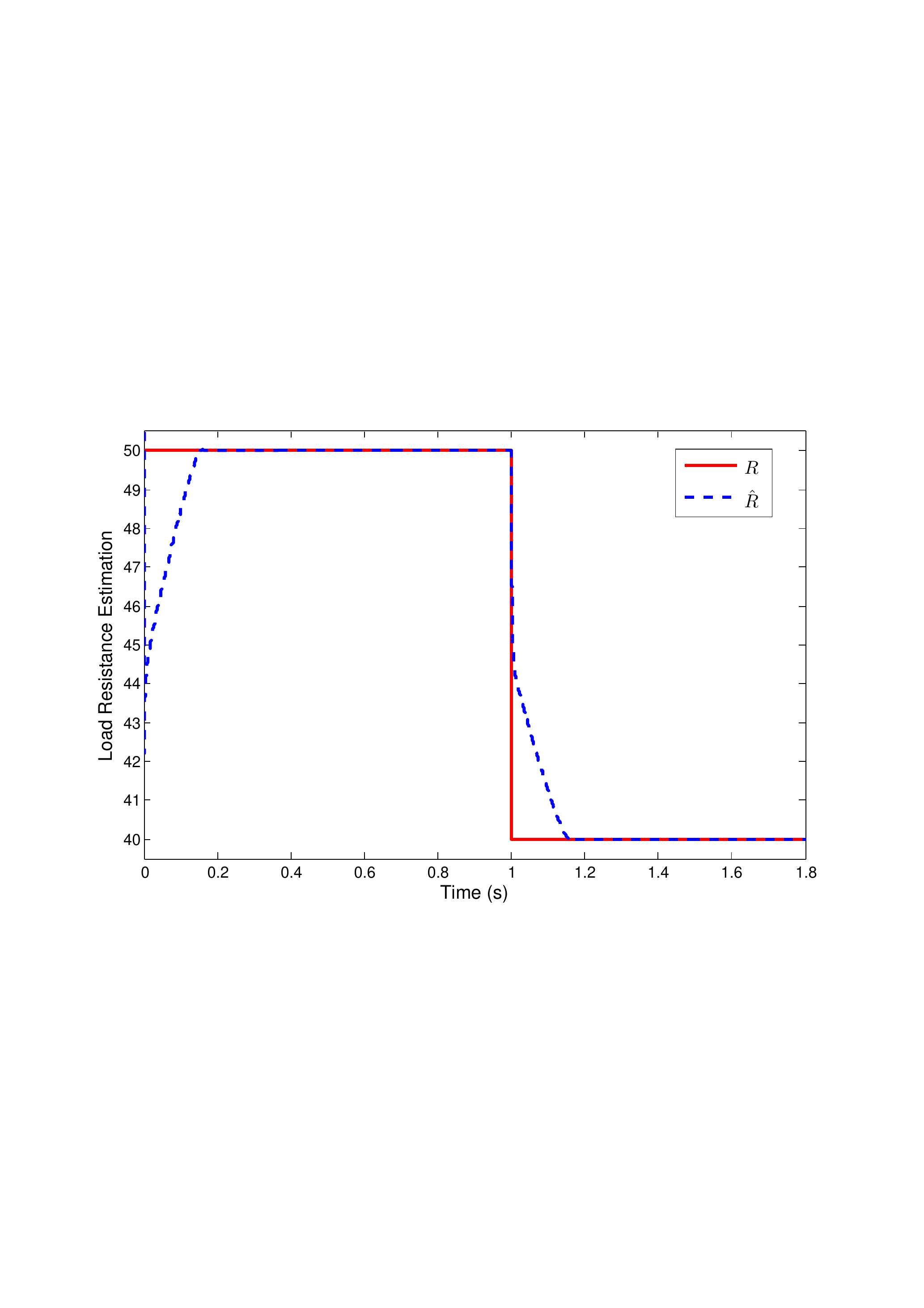}
  \caption{Load Resistance Estimation Via Super-Twisting Observer}
  \label{fig:estimationloadresistance}
\end{figure}
\begin{figure}[!htb]%
\centering
\subfigure[The case of super-twisting SMC]{\label{fig:pfsupertwisting}
\includegraphics[width=2.5in]{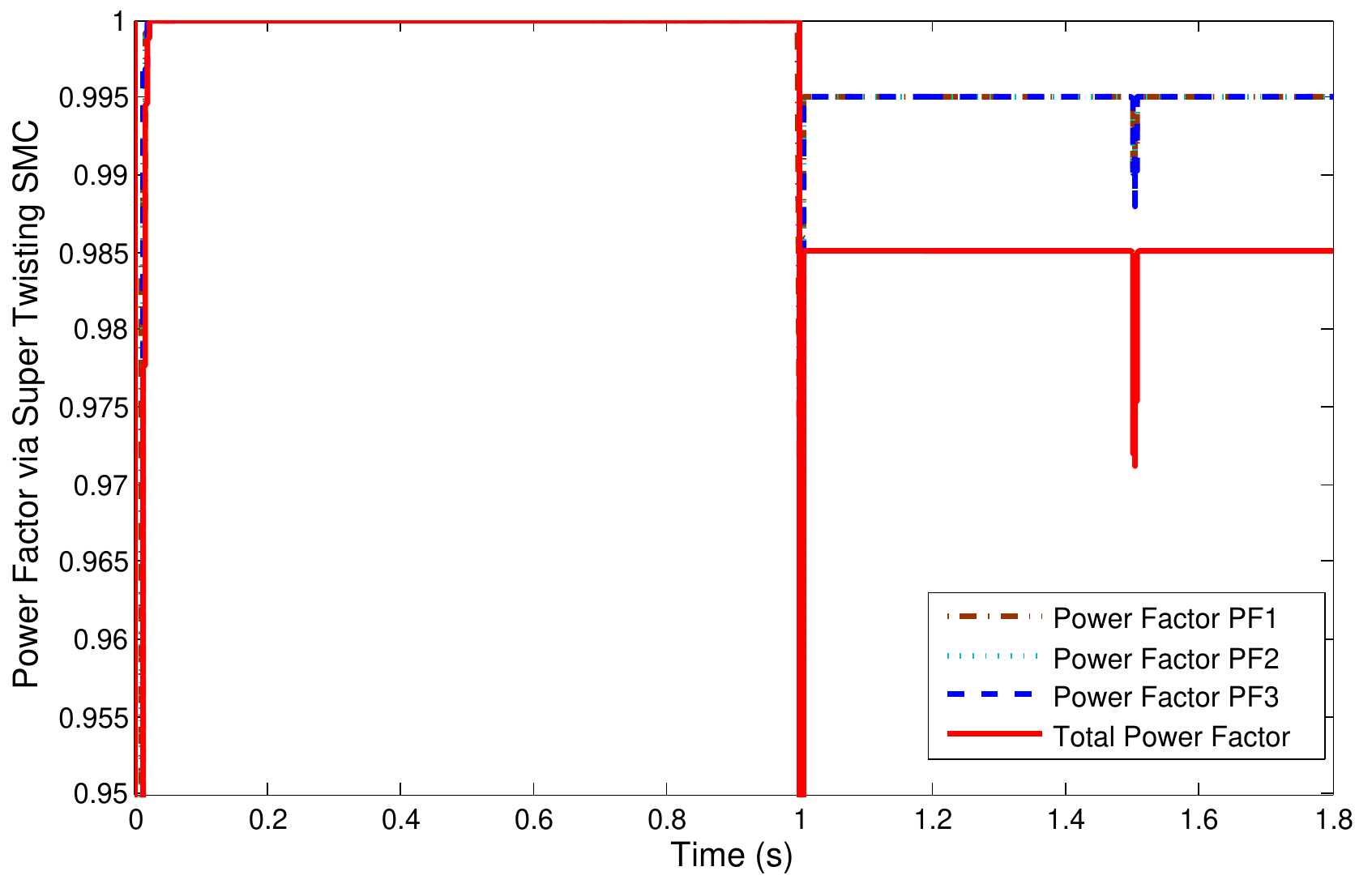}}
\quad
\subfigure[The case of PI Control]
{\label{fig:pftraditional}\includegraphics[width=2.5in]{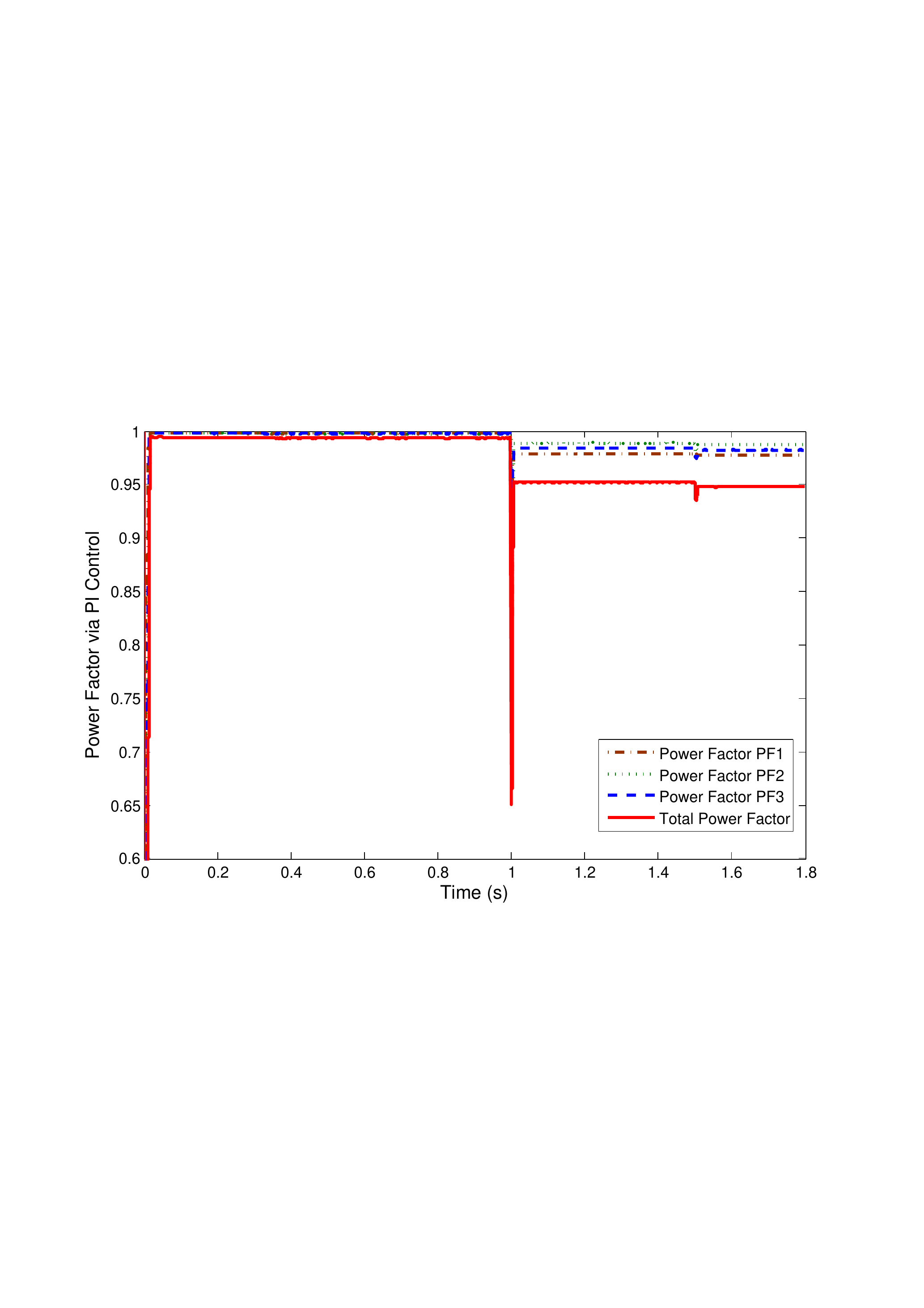}}
\caption{Power factor of the AC/DC converter}
\label{powerfactorcalculation}
\end{figure}
\newpage
%
\bibliographystyle{tCON}
\bibliography{acdc_journal1}
\end{document}